\def\GG{{\mathbb G}}
\def\NN{{\mathbb N}}
\def\ZZ{{\mathbb Z}}
\def\QQ{{\mathbb Q}}
\def\G{{\mathcal G}}
\def\H{{\mathcal H}}
\def\fH{{\mathfrak H}}
\def\fM{{\mathfrak M}}
\def\id{{\rm id}}
\def\cat#1{{\sf #1}}
\def\vect#1{\text{\boldmath $#1$\unboldmath}} 
\def\ie{i.\,e.}
\def\isom{\cong}
\def\ideal{\unlhd}
\DeclareMathOperator{\Ker}{Ker}
\DeclareMathOperator{\Aut}{Aut}
\DeclareMathOperator{\GL}{GL}
\DeclareMathOperator{\Gal}{Gal}
\DeclareMathOperator{\Quot}{Quot}
\DeclareMathOperator{\Spec}{Spec}
\def\uGal{\underline{\Gal}}
\def\markdef{\bf }
\theoremstyle{plain}
\newtheorem{thm}{Theorem}[section]
\newtheorem{cor}[thm]{Corollary}
\newtheorem{lem}[thm]{Lemma}
\theoremstyle{definition}
\newtheorem{defn}[thm]{Definition}
\newtheorem{exmp}[thm]{Example}
\newtheorem{rem}[thm]{Remark}
\newenvironment{notation}{{\bf Notation}\it}
\newtheoremstyle{Acknowledgements}
  {}
    {}
     {}
     {}
    {\bfseries}
    {}
     {.5em}
    {\thmname{#1}\thmnumber{ }\thmnote{ (#3)}}
\theoremstyle{Acknowledgements}
\newtheorem{ack}{Acknowledgements.}
\begin{document}

\title[finite inverse problem]{On the finite inverse problem in iterative differential Galois
theory}

\author{Andreas Maurischat}
\address{Interdisciplinary
Center for  Scientific Computing~(IWR), Heidelberg 
University, Im Neuenheimer Feld 368, 69120 Heidelberg, Germany }
\email{andreas.maurischat@iwr.uni-heidelberg.de}
\urladdr{http://www.iwr.uni-heidelberg.de/~Andreas.Maurischat}
\thanks{}

\subjclass[2000]{12H20, 12F15}

\keywords{Differential Galois theory, inseparable
  extensions, finite group schemes}

\begin{abstract}
In positive characteristic, nearly all Picard-Vessiot extensions are inseparable over some intermediate iterative differential extensions. In the Galois correspondence, these intermediate fields correspond to
nonreduced subgroup schemes of the Galois group scheme. Moreover, the
Galois group scheme itself may be nonreduced, or even infinitesimal.
In this article, we investigate which finite group schemes
occur as iterative differential Galois group schemes over a given ID-field.
For a large class of ID-fields, we give a description of all occuring finite group
schemes.
\end{abstract}

\maketitle

\section{Introduction}

Picard-Vessiot theory for iterative differential extensions in positive
characteristic as conceived by Matzat and van der Put in \cite{bhm-mvdp:ideac}
was restricted to separable extensions and algebraically closed fields of
constants. This restriction was necessary, since the Galois group was given as (the rational
points of) a linear algebraic group. Furthermore, intermediate fields over which
the Picard-Vessiot ring is inseparable are not taken account in their Galois
correspondence.
In \cite{am:gticngg}, the Picard-Vessiot theory was extended to perfect fields
of constants and to inseparable extensions. This was made possible
by constructing the Galois group as an affine group scheme. In the case of a
separable PV-extension over an algebraically closed field of constants $C$, the
$C$-rational points of this group scheme are exactly the original Galois group as defined by Matzat and van der Put.

In this article, we solve the inverse problem for finite group schemes in
iterative Picard-Vessiot theory. That is, we give necessary and sufficient
conditions for a finite group scheme to be a Galois group scheme
over a given ID-field $F$. The general result which still depends on the inverse problem
in classical Galois theory over $F$ is given in
Theorem~\ref{thm:finite-realisation}. In the case where $F$ itself is a
PV-extension of an algebraic function field over algebraically closed
constants, this classical inverse problem is solved. Therefore, in that case, we can
explicitly describe the occuring Galois group schemes
(cf.~Corollary~\ref{cor:inverse-problem-over-pv-field}).

This article is structured as follows.
In Section \ref{sec:notation}, we describe the notation and results regarding
Picard-Vessiot theory in positive characteristic which are required later.
Section \ref{sec:insep-pv} is dedicated to the case of infinitesimal Galois 
group schemes (i.e.~purely inseparable PV-extensions), whereas results for finite
reduced Galois group schemes are given in Section \ref{sec:sep-pv}. Results from both sections are
used in Section \ref{sec:finite-pv} to solve the inverse problem for
finite group schemes.

\begin{ack}
I thank B.~H.~Matzat for drawing my attention to the inverse problem for
nonreduced group schemes. I also thank E.~Dufresne for helpful comments to
improve this paper.
\end{ack}

\section{Basic notation}\label{sec:notation}

All rings are assumed to be commutative with unit.
We will use the following notation (see also \cite{am:igsidgg}). An iterative 
derivation on a ring $R$ is a homomorphism of rings $\theta:R\to
R[[T]]$, such that $\theta^{(0)}=\id_R$ and for all $i,j\geq
0$, $\theta^{(i)}\circ \theta^{(j)}=\binom{i+j}{i}\theta^{(i+j)}$,
where the maps $\theta^{(i)}:R\to R$ are defined by
$\theta(r)=:\sum_{i=0}^\infty \theta^{(i)}(r)T^i$.
The pair $(R,\theta)$ is then called an ID-ring and
$C_R:=\{ r \in R\mid \theta(r)=r\}$ is called the {\markdef ring of
  constants } of $(R,\theta)$. An iterative
derivation $\theta$ resp. the ID-ring $(R,\theta)$ is called {\markdef nontrivial}, if $C_R\ne R$.
An ideal $I\ideal R$ is called an
 {\markdef ID-ideal} if $\theta(I)\subseteq I[[T]]$ and $R$ is
 {\markdef ID-simple} if $R$ has no proper nontrivial
 ID-ideals. An ID-ring which is a field is called an {\markdef ID-field}. 
Iterative derivations are extended to localisations by
 $\theta(\frac{r}{s}):=\theta(r)\theta(s)^{-1}$ and to tensor products
 by 
$$\theta^{(k)}(r\otimes s)=\sum_{i+j=k} \theta^{(i)}(r)\otimes
\theta^{(j)}(s)$$ 
for all $k\geq 0$.

If $R$ is an integral domain of positive characteristic $p$, the iterative
derivation induces a family of ID-subrings given by
$$R_\ell:=\bigcap_{0<j<p^\ell} \Ker(\theta^{(j)}),$$
($\ell\in\NN$) and also a family of ID-overrings $R_{[\ell]}:=(R_\ell)^{p^{-\ell}}$ ($\ell\in\NN$) in some
inseparable closure with iterative derivation given by
$$\theta_{R_{[\ell]}}(x):=\left(\theta_R(x^{p^\ell})\right)^{p^{-\ell}}.$$

\begin{notation}
From now on, $(F,\theta)$ denotes an ID-field of positive characteristic $p$,
and $C=C_F$ its field of constants. We assume that $C$ is a
perfect field, and that $\theta$ is {\em non-degenerate}, i.\,e., that
$\theta^{(1)}\ne 0$. 
\end{notation}

\begin{rem}\label{rem:max-id-extension}
In this setting, $F_\ell$ is an ID-subfield with $C_{F_\ell}=C$,
and since $\theta^{(p^{\ell})}$ is a nilpotent $F_{\ell+1}$-linear
endomorphism of $F_{\ell}$ and $\theta^{(p^{\ell})}$ is of nilpotence order
$p$ with
$1$-dimensional kernel, one has $[F_\ell:F_{\ell+1}]=p$. Therefore, one obtains
$[F:F_\ell]=p^\ell$ for all $\ell$.

Furthermore, $F_{[\ell]}$ is an ID-extension of $F$ with same constants (since
$C$ is perfect), and $[F_{[\ell]}:F]=p^{\ell(m-1)}$, where $m$ denotes the
degree of imperfection of $F$ (possibly infinite).

In \cite{am:igsidgg}, Prop. 4.1, it is shown that $F_{[\ell]}/F$ is the maximal
purely inseparable ID-extension of $F$ of exponent $\leq \ell$.
\end{rem}

We now recall some definitions from Picard-Vessiot theory:
\begin{defn}
Let $A=\sum_{k=0}^\infty A_k T^k\in \GL_n(F[[T]])$ be a matrix with
$A_0=\mathds{1}_n$ and for all $k,l\in\NN$, 
$\binom{k+l}{l}A_{k+l}=\sum_{i+j=l} \theta^{(i)}(A_{k})\cdot A_j.$
An equation
$$\theta(\vect{y})=A\vect{y},$$
where $\vect{y}$ is a vector of indeterminants, is called an {\markdef
  iterative differential equation (IDE)}.
\end{defn}

\begin{rem}\label{ide-condition}
The condition on the $A_{k}$ is equivalent to the condition that
$\theta^{(k)}(\theta^{(l)}(Y_{ij}))=
\binom{k+l}{k}\theta^{(k+l)}(Y_{ij})$ holds
for a matrix $Y=(Y_{ij})_{1\leq i,j\leq n}\in\GL_n(E)$ satisfying $\theta(Y)=AY$, where $E$ is some ID-extension of
$F$. (Such a $Y$ is called a {\markdef fundamental solution matrix}) . The condition
$A_{0}=\mathds{1}_n$ is equivalent to
$\theta^{(0)}(Y_{ij})=Y_{ij}$, and already implies that the matrix $A$ is
invertible.
\end{rem}

\begin{defn}
An ID-ring $(R,\theta_R)\geq (F,\theta)$ is called a {\markdef
  Picard-Vessiot ring} (PV-ring) for the IDE $\theta(\vect{y})=A\vect{y}$, if
the following holds: 
\begin{enumerate}
\item $R$ is an ID-simple ring.
\item There is a fundamental solution matrix $Y\in\GL_n(R)$, \ie{}, an invertible
  matrix satisfying $\theta(Y)=AY$.
\item As an $F$-algebra, $R$ is generated by the coefficients of $Y$
  and by $\det(Y)^{-1}$.
\item $C_R=C_F=C$.
\end{enumerate}
The quotient field $E=\Quot(R)$ (which exists, since such a PV-ring
is always an integral domain) is called a {\markdef
  Picard-Vessiot field} (PV-field) for the IDE
$\theta(\vect{y})=A\vect{y}$.\footnote{The PV-rings and PV-fields defined
here were called pseudo Picard-Vessiot rings (resp. pseudo Picard-Vessiot fields)
in \cite{am:gticngg} and \cite{am:igsidgg}. This definition, however, is the
most natural generalisation of the original definition of PV-rings and
PV-fields to non algebraically closed fields of constants.}
\end{defn}

For a PV-ring $R/F$ one defines the functor
$$\underline{\Aut}^{ID}(R/F): (\cat{Algebras} / C) \to (\cat{Groups}),
D\mapsto \Aut^{ID}(R\otimes_C D/F\otimes_C D)$$
where $D$ is equipped with the trivial iterative derivation.
In \cite{am:gticngg}, Sect.~10, it is shown that this functor is
represent\-able by a $C$-algebra of
finite type, and hence, is an affine group scheme of finite type over 
$C$. This group scheme is called the (iterative differential) {\markdef Galois
  group scheme} of the extension $R$ over $F$ -- denoted by
$\underline{\Gal}(R/F)$ --, or also, the
Galois group scheme of  the extension $E$ over $F$, $\underline{\Gal}(E/F)$,
where
$E=\Quot(R)$ is the corresponding PV-field.

Furthermore, $\Spec(R)$ is a $(\uGal(R/F) \times_{C}F)$-torsor and the
corresponding isomorphism of rings
\begin{equation}\label{eq:torsor-iso} \gamma:R\otimes_F R\to R\otimes_C
C[\uGal(R/F)]\end{equation}
is an $R$-linear ID-isomorphism. Again, $C[\uGal(R/F)]$ is equipped with
the trivial iterative derivation.

This torsor isomorphism (\ref{eq:torsor-iso}) is the key tool to establish the Galois
correspondence between
the closed subgroup schemes of $\G=\uGal(R/F)$ and the intermediate ID-fields
of the extension $E/F$, in more detail:
\begin{thm}{\bf (Galois correspondence)}\label{galois_correspondence}
Let $E/F$ be a PV-extension with PV-ring $R$ and Galois group scheme
$\G$.

Then there is an inclusion reversing bijection between
$$\fH:=\{ \H \mid \H\leq\G \text{ closed subgroup scheme of }\G
\}$$
and
$$\fM:=\{ M \mid F\leq M\leq E \text{ intermediate ID-field} \}$$
given by 
$\Psi:\fH \to \fM,\H\mapsto E^{\H}$ and 
$\Phi:\fM \to \fH, M\mapsto \underline{\Gal}(E/M)$.\\
With respect to this bijection, $\H\in \fH$ is a normal subgroup of $\G$, if and
only if $E^{\H}$ is a PV-field over $F$. In this case the Galois
group scheme $\underline{\Gal}(E^{\H}/F)$ is isomorphic to $\G/\H$.
\end{thm}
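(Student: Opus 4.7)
The plan is to mimic the classical Picard--Vessiot Galois correspondence, with the torsor isomorphism $\gamma: R\otimes_F R \to R\otimes_C C[\G]$ of (\ref{eq:torsor-iso}) serving as the central tool. The argument splits into three parts: well-definedness of $\Psi$ and $\Phi$, that they are mutually inverse, and the normal subgroup statement.

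For well-definedness of $\Phi$, I would first verify that for an intermediate ID-field $M$ the $M$-algebra $RM\subseteq E$ is a PV-ring over $M$ for the same IDE $\theta(\vect{y}) = A\vect{y}$: ID-simplicity follows since an ID-ideal in $RM$ would contract to one in $R$, the constants stay equal to $C$, and the remaining axioms are inherited from $R$. Hence $\uGal(E/M)$ is defined and embeds as a closed subgroup scheme of $\G$ via restriction of automorphisms. For $\Psi$, I would define the fixed subring scheme-theoretically by $R^\H := \{r\in R \mid \gamma(1\otimes r) - r\otimes 1 \in R\otimes_C I_\H\}$, where $I_\H\ideal C[\G]$ is the defining ideal of $\H$. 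Since $\gamma$ is an ID-homomorphism, $R^\H$ is an ID-subring, and I would set $E^\H := \Quot(R^\H)$, which is then an intermediate ID-field.

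To show $\Phi\circ\Psi = \id$, I would argue that $\H$ acts trivially on $R^\H$ by construction, so $\H\leq\uGal(E/E^\H)$, while the reverse inclusion comes from the torsor property: the pulled-back torsor isomorphism for $R$ over $R^\H$ identifies $\uGal(E/E^\H)$ with the closed subscheme of $\G$ cut out by $I_\H$, namely $\H$ itself. Conversely, to show $\Psi\circ\Phi = \id$, I would apply the resulting identity to the PV-extension $E/M$ with full group $\uGal(E/M)$: the elements of $R$ fixed scheme-theoretically by the whole group satisfy $\gamma(1\otimes r) = r\otimes 1$, and via the torsor isomorphism this forces $r\in R\cap(M\otimes_F R) = RM$, whose fraction field is $M$.

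For the normal subgroup statement, if $\H\ideal\G$ then the coaction of $\G$ on $R$ descends to a coaction $R^\H \to R^\H \otimes_C C[\G/\H]$, making $R^\H$ a PV-ring over $F$ with Galois group scheme $\G/\H$; hence $E^\H = \Quot(R^\H)$ is a PV-field with $\uGal(E^\H/F)\cong\G/\H$. Conversely, if $E^\H/F$ is PV, restriction of automorphisms yields a surjection $\G\twoheadrightarrow\uGal(E^\H/F)$ whose kernel is exactly $\uGal(E/E^\H) = \H$, so $\H$ is normal. The main obstacle I expect is the interaction between the scheme-theoretic definition of $R^\H$ and its fraction field in the nonreduced setting: for an infinitesimal $\H$ one has $\H(C) = \{e\}$ yet $R^\H$ is strictly smaller than $R$, so pointwise arguments fail and one must work directly with $\gamma$ and the ideal $I_\H$. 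The essential technical input, namely that $\Spec(R)\to\Spec(R^\H)$ is itself an $\H$-torsor and in particular that $R$ is faithfully flat over $R^\H$, is what lets the torsor structure descend and makes the correspondence work uniformly for all closed subgroup schemes.
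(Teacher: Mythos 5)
Note first that the paper does not actually prove Theorem~\ref{galois_correspondence}: it defers entirely to \cite{am:gticngg}, Thm.~11.5 and \cite{am:igsidgg}, Prop.~3.4 and Thm.~3.5. Your outline follows the same architecture as those proofs (the torsor isomorphism $\gamma$ as the central tool, reduction of both composites $\Phi\circ\Psi$ and $\Psi\circ\Phi$ to statements about $\gamma$), so the strategy is the right one. But as a proof it has a genuine gap at exactly the point you flag yourself: the claim that $\Spec(R)\to\Spec(R^{\H})$ is an $\H$-torsor, with $R$ faithfully flat over $R^{\H}$, \emph{is} the hard inclusion $\uGal(E/E^{\H})\leq\H$, not a tool for proving it. Writing ``the pulled-back torsor isomorphism for $R$ over $R^{\H}$ identifies $\uGal(E/E^{\H})$ with the closed subscheme cut out by $I_{\H}$'' presupposes what is to be shown, namely that $\gamma$ restricts to an isomorphism $R\otimes_{E^{\H}}R\to R\otimes_C C[\G]/I_{\H}$, equivalently that $I_{\H}$ can be recovered from $E^{\H}$. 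In the nonreduced setting this is where all the work lies (in the references it rests on Hopf-algebraic faithful-flatness results applied to $C[\G]\to C[\G]/I_{\H}$ and transported through $\gamma$); without it, injectivity of $\Psi$ and hence the whole bijection is unproved. The same input is silently used again in the normality part, both for descending the coaction to $R^{\H}\to R^{\H}\otimes_C C[\G/\H]$ and for verifying that $R^{\H}$ satisfies the PV-ring axioms.

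Two further steps are concretely off. In the proof of $\Psi\circ\Phi=\id$ you assert that $\gamma(1\otimes r)=r\otimes 1$ ``forces $r\in R\cap(M\otimes_F R)=RM$, whose fraction field is $M$''; this is not meaningful as written, and $\Quot(RM)=E$, not $M$. The correct argument uses the torsor isomorphism of $RM$ over $M$: full invariance of $e\in E$ gives $e\otimes 1=1\otimes e$ in $\Quot(RM\otimes_M RM)$, which forces $e\in M$. Relatedly, you define $E^{\H}:=\Quot(R^{\H})$, whereas the paper defines $E^{\H}$ as the invariant \emph{fractions} in $E$; with your definition, $\Psi\circ\Phi=\id$ additionally requires showing that every invariant fraction is a quotient of invariant elements of $R$ (a denominator-ideal argument that again uses the group action and faithful flatness). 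Your well-definedness argument for $\Phi$ is essentially fine once one adds the observation that a nonzero ID-ideal of $RM\subseteq\Quot(R)$ meets $R$ nontrivially (clear denominators), so that contraction to $R$ really does yield a nonzero ID-ideal. In summary: correct skeleton, matching the strategy of the cited proofs, but the load-bearing lemma is named rather than proved.
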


(See \cite{am:gticngg}, Thm.~11.5, resp. \cite{am:igsidgg}, Prop.~3.4 and 
Thm.~3.5 for the proof of this theorem.) 
The invariants $E^\H$ are defined to be all elements $e=\frac{r}{s}\in E$, such
that, for all 
$C$-algebras $D$, and all $h\in\H(D)$,
$$\frac{h(r\otimes 1)}{h(s\otimes 1)}=e\otimes 1\in\Quot(E\otimes_C D),$$
where $\Quot(E\otimes_C D)$ denotes the localisation of $E\otimes_C D$ by all
nonzerodivisors.

The following table shows some properties of the Galois group scheme and the
corresponding properties of the PV-extension $E/F$:

\medskip

\centerline{\begin{tabular}{c|c}
{\bf Property of $\G$} & {\bf Property of $E/F$}\\[1mm]
\hline \\[-2mm]
finite scheme with $\dim_C(C[\G])=m$ & finite extension with $[E:F]=m$.\\[1mm]
reduced scheme & separable extension\\[1mm]
infinitesimal scheme of height $n$ & purely inseparable extension
of exponent $n$.
\end{tabular}}

\vspace*{5mm}

The first correspondence is obtained by comparing dimensions in the
torsor-isomorphism, since for finite extensions the PV-ring already is a field.
The other two are proved in \cite{am:gticngg} and \cite{am:igsidgg}, respectively.

\section{Purely inseparable PV-extensions}\label{sec:insep-pv}

In this section, we are interested in PV-extensions with infinitesimal Galois
group schemes, i.e., finite purely inseparable PV-extensions. Since the
results presented here  already appear in \cite{am:igsidgg},  most
proofs are only sketched.

\begin{thm}\label{frob-pullback} \cite[Thm.~4.2]{am:igsidgg}
Let $E$ be a PV-extension of $F$, and let $\ell\in\NN$. Then
$E_{[\ell]}/F_{[\ell]}$ is a PV-extension, and its Galois group
scheme is related to $\uGal(E/F)$ by $({\bf
  Frob}^\ell)^{*}\left(\uGal(E_{[\ell]}/F_{[\ell]})\right)\isom
\uGal(E/F)$, where ${\bf Frob}$ denotes the Frobenius
morphism on $\Spec(C)$.
\end{thm}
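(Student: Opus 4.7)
The plan is to construct the PV-ring for $E_{[\ell]}/F_{[\ell]}$ explicitly as $R_{[\ell]}$, verify the four PV-axioms, and then derive the Frobenius-pullback relation on Galois group schemes by comparing torsor isomorphisms.

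First, I would set up the fundamental solution matrix inside $R_{[\ell]}$. Let $Y \in \GL_n(R)$ be a fundamental solution matrix for the IDE defining $R/F$, with $\theta_R(Y) = AY$ and $A = \sum_k A_k T^k$. Since $\theta_R$ is a ring homomorphism and the characteristic is $p$, one has $\theta_R(Y^{p^\ell}) = \theta_R(Y)^{p^\ell} = \sum_k (A_k Y)^{p^\ell} T^{k p^\ell}$, so $Y^{p^\ell} \in R_\ell$ and hence $Y \in R_{[\ell]} = R_\ell^{p^{-\ell}}$. Unwinding the definition of $\theta_{R_{[\ell]}}$ then gives $\theta_{R_{[\ell]}}^{(k)}(Y) = (\theta_R^{(kp^\ell)}(Y^{p^\ell}))^{p^{-\ell}} = ((A_k Y)^{p^\ell})^{p^{-\ell}} = A_k Y$, so $Y$ remains a fundamental solution matrix for the same $A$, now viewed in $\GL_n(F_{[\ell]}[[T]])$.

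Next, I would verify the four PV-axioms for $R_{[\ell]}/F_{[\ell]}$. For ID-simplicity, any ID-ideal $I$ of $R_{[\ell]}$ yields, through $p^\ell$-th powers, an ID-ideal of $R_\ell$ with respect to the re-indexed iterative derivation $\theta_\ell^{(k)} = \theta_R^{(kp^\ell)}$, which extends to an ID-ideal of $R$ via the inclusion $R_\ell \hookrightarrow R$; ID-simplicity of $R$ then forces $I$ to be trivial. The constants axiom $C_{R_{[\ell]}} = C$ follows from the perfectness of $C$ together with $C_R = C$, exactly as for $F_{[\ell]}$ in Remark~\ref{rem:max-id-extension}. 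For generation by $Y$ and $\det(Y)^{-1}$, note that $F_{[\ell]}[Y, \det(Y)^{-1}] = F_{[\ell]} \cdot R \subseteq R_{[\ell]}$; the reverse inclusion reduces, after taking $p^\ell$-th powers, to $R_\ell = F_\ell \cdot R^{p^\ell}$, which I would prove from the torsor isomorphism for $R/F$ and the $F$-module structure of $R$ that it encodes.

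Finally, the torsor isomorphism for $R_{[\ell]}/F_{[\ell]}$ yields $R_{[\ell]}\otimes_{F_{[\ell]}} R_{[\ell]} \isom R_{[\ell]} \otimes_C C[\uGal(R_{[\ell]}/F_{[\ell]})]$. Raising both sides to the $p^\ell$-th power gives an isomorphism $R_\ell \otimes_{F_\ell} R_\ell \isom R_\ell \otimes_C C[\uGal(R_{[\ell]}/F_{[\ell]})]^{p^\ell}$, in which the Hopf-algebra factor appears with its $C$-structure twisted by the $\ell$-fold absolute Frobenius of $C$. Matching this against the torsor isomorphism for $R/F$ (via the inclusions $R^{p^\ell} \subseteq R_\ell$ and $F^{p^\ell} \subseteq F_\ell$) and invoking the uniqueness of the representing Hopf algebra identifies $({\bf Frob}^\ell)^*(\uGal(E_{[\ell]}/F_{[\ell]}))$ with $\uGal(E/F)$. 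The main obstacle is the generation axiom: verifying $R_\ell = F_\ell \cdot R^{p^\ell}$ requires a careful analysis of the $F_\ell$-module structure of $R_\ell$, and the Frobenius bookkeeping in the final step needs care to identify the twisted Hopf algebra with the Frobenius pullback of the group scheme.
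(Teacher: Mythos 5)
There is a genuine gap, and it sits exactly where you yourself flag ``the main obstacle'': the generation axiom. Because you keep the original matrix $A\in\GL_n(F[[T]])$ and the original fundamental matrix $Y\in\GL_n(R)$, the ring your construction actually produces is $F_{[\ell]}[Y,\det(Y)^{-1}]=F_{[\ell]}\cdot R$, and the identity you propose to prove, $R_\ell=F_\ell\cdot R^{p^\ell}$, is false in general. The paper's own Example~5.3 (relabelled) is a counterexample: take $F=C(t)$ with $\theta=\theta_t$ and $R=F[t^{\alpha},t^{-\alpha}]$, $\alpha\in\ZZ_p\setminus\QQ$ with $0$-th $p$-adic digit $\alpha_0$. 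Then $t^{\alpha-\alpha_0}=t^{-\alpha_0}t^{\alpha}\in R$ lies in $R_1$, since $\theta^{(j)}(t^{\alpha-\alpha_0})=\binom{\alpha-\alpha_0}{j}t^{\alpha-\alpha_0-j}$ and $\binom{\alpha-\alpha_0}{j}\equiv 0 \pmod p$ for $0<j<p$ by Lucas; but $F_1\cdot R^{p}=C(t^{p})[t^{\pm p\alpha}]$ contains only monomials $t^{pm+pk\alpha}$ ($m,k\in\ZZ$), and $pm+pk\alpha=\alpha-\alpha_0$ would force $(pk-1)\alpha\in\ZZ$, which is impossible. Hence $F_{[\ell]}\cdot R\subsetneq R_{[\ell]}$, i.e.\ $E_{[\ell]}$ is strictly larger than $F_{[\ell]}E$ --- this is precisely the content of the paper's computation $\uGal(F_{[\ell]}/F)\isom\mu_{p^\ell}\neq 1$ in that example.

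This error also undermines the final step: if $E_{[\ell]}$ were equal to $F_{[\ell]}E$, then $\uGal(E_{[\ell]}/F_{[\ell]})$ would simply be a closed subgroup of $\uGal(E/F)$ obtained by base change, and no Frobenius pullback could appear; conversely, your comparison of torsor isomorphisms ``via the inclusions $R^{p^\ell}\subseteq R_\ell$ and $F^{p^\ell}\subseteq F_\ell$'' silently assumes $\uGal(R_\ell/F_\ell)\isom\uGal(R/F)$, which is a nontrivial step. The paper's route is different in exactly the place where yours breaks: using non-degeneracy one first shows $[E:E_\ell]=p^\ell=[F:F_\ell]$, which allows one to choose a \emph{new} fundamental solution matrix $Y'\in\GL_n(E_\ell)$ for a (possibly different) defining IDE whose matrix lies in $\GL_n(F_\ell[[T^{p^\ell}]])$; this gives $\uGal(E_\ell/F_\ell)\isom\uGal(E/F)$, and only then does one take entrywise $p^\ell$-th roots, the Frobenius twist arising because the defining matrix of the root-extracted system is the $p^{-\ell}$-th root of the re-indexed one, not the original $A$. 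Your secondary verifications (ID-simplicity by pushing ID-ideals down to $R$, constancy of $C_{R_{[\ell]}}$ from perfectness of $C$) are fine, but they do not repair the core construction.
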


\begin{proof}
We only sketch the proof, see  \cite{am:igsidgg} for details.

Let $R\subseteq E$ be the corresponding PV-ring. Since the iterative
derivation is non-degenerate on $F$, on has
$[F:F_\ell]=p^\ell=[E:E_\ell]$.
This implies that there exists a fundamental solution matrix $Y\in
\GL_n(E_\ell)$ for some IDE $\theta(\vect{y})=A\vect{y}$ defining the
PV-extension.
One then shows that $A\in \GL_n(F_\ell[[T^{p^\ell}]])$, and hence,
$E_\ell/F_\ell$ is a PV-extension with Galois group scheme
$\uGal(R_\ell/F_\ell)\isom \uGal(R/F)$.
By taking $p^{\ell}$-th roots, one obtains that $R_{[\ell]}$ is a
PV-ring over $F_{[\ell]}$ with fundamental solution matrix
$\left((Y_{i,j})^{p^{-\ell}}\right)_{i,j}$.
This gives the desired property:
\begin{equation*}({\bf
Frob}^\ell)^{*}\left(\uGal(E_{[\ell]}/F_{[\ell]})\right)\isom
\uGal(E_\ell/F_\ell)
\isom\uGal(E/F). \qedhere
\end{equation*}
\end{proof}

From this theorem we obtain a criterion for $F_{[\ell]}/F$ being a
PV-extension.

\begin{cor}\label{cor:F_ell-is-pv} \cite[Cor.~4.3]{am:igsidgg}
Let $F$ be an ID-field which is a PV-extension of some nontrivial ID-field $L$ satisfying $L_1=L^p$.
Then $F_{[\ell]}$ is a PV-extension of $F$, for all $\ell\in\NN$.
\end{cor}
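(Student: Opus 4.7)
My plan is to combine Theorem~\ref{frob-pullback} with the Galois correspondence, via the auxiliary observation that the hypothesis $L_1 = L^p$ actually implies $L_{[\ell]} = L$ for every $\ell \in \NN$. Once this is in hand, applying Theorem~\ref{frob-pullback} to the PV-extension $F/L$ yields that $F_{[\ell]}/L_{[\ell]} = F_{[\ell]}/L$ is a PV-extension. Since $F^{p^\ell} \subseteq F_\ell$ and hence $F \subseteq F_{[\ell]}$, the field $F$ is an intermediate ID-field of $F_{[\ell]}/L$, and the Galois correspondence (Theorem~\ref{galois_correspondence}) then produces $\uGal(F_{[\ell]}/F)$ as a closed subgroup scheme of $\uGal(F_{[\ell]}/L)$, so that $F_{[\ell]}/F$ is a PV-extension as desired.

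The real work is to show by induction on $\ell$ that $L_\ell = L^{p^\ell}$. The inclusion $L^{p^\ell} \subseteq L_\ell$ is immediate, since $\theta(c^{p^\ell}) = \theta(c)^{p^\ell}$ lies in $L[[T^{p^\ell}]]$, forcing $\theta^{(j)}(c^{p^\ell}) = 0$ for $0 < j < p^\ell$. For the reverse inclusion, I would assume $L_k = L^{p^k}$ and take $x \in L_{k+1} \subseteq L_k$, so that $x = b^{p^k}$ for some $b \in L$. From the identity $\theta(b^{p^k}) = \theta(b)^{p^k}$ one reads off $\theta^{(j)}(b^{p^k}) = 0$ whenever $p^k \nmid j$, and $\theta^{(p^k m)}(b^{p^k}) = \theta^{(m)}(b)^{p^k}$. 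Demanding $\theta^{(j)}(b^{p^k}) = 0$ for all $0 < j < p^{k+1}$ thus reduces, since $L$ is a field, to $\theta^{(m)}(b) = 0$ for $0 < m < p$, i.e.\ $b \in L_1 = L^p$. Hence $x = b^{p^k} \in L^{p^{k+1}}$. Taking $p^\ell$-th roots then yields $L_{[\ell]} = (L_\ell)^{p^{-\ell}} = L$.

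The main effort is entirely in this induction; after that, the corollary is a direct application of Theorem~\ref{frob-pullback} and the Galois correspondence, with no further obstacles. The conceptual content is really the reinterpretation of $L_1 = L^p$ as the statement $L_{[1]} = L$, together with its iteration $L_{[\ell]} = L$, which makes the base of the Frobenius pullback in Theorem~\ref{frob-pullback} collapse onto $L$ itself.
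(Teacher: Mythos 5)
Your proposal follows essentially the same route as the paper: establish $L_{[\ell]}=L$, apply Theorem~\ref{frob-pullback} to the PV-extension $F/L$ to get that $F_{[\ell]}/L$ is a PV-extension, and then pass to the intermediate field $F$ via the Galois correspondence. Your induction showing $L_\ell=L^{p^\ell}$ is correct and is in fact a self-contained verification of what the paper simply cites from Remark~\ref{rem:max-id-extension} (via the degree-of-imperfection formula $[L_{[\ell]}:L]=p^{\ell(m-1)}$ with $m=1$), so that part is fine and arguably more transparent.

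The one step you skip, which the paper treats as the first order of business, is checking that the iterative derivation on $L$ is \emph{non-degenerate}. Theorem~\ref{frob-pullback} is stated under the standing assumption (from the Notation block) that the base ID-field has $\theta^{(1)}\neq 0$, and its proof genuinely uses this (to get $[F:F_\ell]=p^\ell$ and hence a fundamental solution matrix in $\GL_n(E_\ell)$). So before applying it with $L$ as the base you must verify $\theta^{(1)}|_L\neq 0$. This is exactly where the hypothesis that $L$ is a \emph{nontrivial} ID-field enters, which your argument never uses: if $\theta^{(1)}|_L=0$ then $L_1=L$, so $L_1=L^p$ forces $L=L^{p^\ell}\subseteq\Ker(\theta^{(j)}|_L)$ for all $0<j<p^\ell$ and all $\ell$, making $\theta|_L$ trivial, a contradiction. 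Adding this short observation closes the gap and makes your proof complete.
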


\begin{proof}
We first remark, that the iterative derivation on $L$ is non-degenerate, since otherwise
$L^p=L_1=L$ and hence $L=L^{p^\ell}\subseteq \Ker(\theta^{(p^\ell)})$ for all $\ell$. This would
imply that $\theta$ is trivial.

By Remark~\ref{rem:max-id-extension}, the condition $L_1=L^p$ implies that
$L_{[\ell]}=L$ for all $\ell$. Hence,
by the previous theorem, $F_{[\ell]}/L$ is a PV-extension, and
therefore, $F_{[\ell]}/F$ is a PV-extension.
\end{proof}

\begin{thm}\label{thm:general-realisation}
Let $F$ be an ID-field with $C_F=C$ perfect, and suppose that $F$ has finite degree of
imperfection.\\
Let $\tilde{C}_{\ell}$ denote  the
maximal subalgebra of $C_{F_{[\ell]}\otimes_F F_{[\ell]}}$ which is a
Hopf algebra with respect to the comultiplication induced by 
$$F_{[\ell]}\otimes_F F_{[\ell]}\longrightarrow
\left(F_{[\ell]}\otimes_F F_{[\ell]}\right)\otimes_{F_{[\ell]}}\left(
F_{[\ell]}\otimes_F F_{[\ell]}\right), a\otimes b\mapsto (a\otimes
1)\otimes (1\otimes b).$$

Then an infinitesimal group scheme of height $\leq \ell$ is realisable as
a Galois group scheme over $F$, if and only if it is a factor group
of $\Spec(\tilde{C}_{\ell})$.

In particular, if $F_{[\ell]}/F$ is a PV-extension, then $\tilde{C}_{\ell}=
C_{F_{[\ell]}\otimes_F F_{[\ell]}}$, and
$\Spec(\tilde{C}_{\ell})$ is isomorphic to $\uGal(F_{[\ell]}/F)$.
\end{thm}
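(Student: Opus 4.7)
The plan is to handle the two implications of the biconditional separately, then dispatch the ``in particular'' clause.

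For the easy direction, I would assume $\G$ is realized as $\uGal(E/F)$ for a PV-extension $E/F$. Being infinitesimal of height $\leq \ell$ means, via the correspondence table of Section~\ref{sec:notation}, that $E/F$ is purely inseparable of exponent $\leq \ell$; hence $E \subseteq F_{[\ell]}$ by the maximality recorded in Remark~\ref{rem:max-id-extension}. Flatness of $F_{[\ell]}/F$ turns the inclusion $E \subseteq F_{[\ell]}$ into an injection $E\otimes_F E \hookrightarrow F_{[\ell]}\otimes_F F_{[\ell]}$ of ID-rings, and restricting to constants gives an inclusion $C[\G] = C_{E\otimes_F E} \hookrightarrow C_{F_{[\ell]}\otimes_F F_{[\ell]}}$. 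The comultiplication on $C[\G]$ provided by the torsor isomorphism (\ref{eq:torsor-iso}) is precisely the restriction of the map $a\otimes b \mapsto (a\otimes 1)\otimes(1\otimes b)$ featured in the theorem, so $C[\G]$ is in fact a Hopf sub-algebra. By maximality, $C[\G] \subseteq \tilde{C}_\ell$, which dually says $\G$ is a factor group of $\Spec(\tilde{C}_\ell)$.

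For the converse, given a Hopf embedding $C[\G] \hookrightarrow \tilde{C}_\ell$, I would construct the PV-extension directly inside $F_{[\ell]}$. Let $R$ denote the image of the composition $F_{[\ell]}\otimes_C C[\G] \to F_{[\ell]}\otimes_C \tilde{C}_\ell \to F_{[\ell]}\otimes_F F_{[\ell]}$, where the second map sends $a \otimes c$ to the product $(a \otimes 1)\cdot c$ inside $F_{[\ell]}\otimes_F F_{[\ell]}$. Then set
$$E := \{a \in F_{[\ell]} \mid 1\otimes a \in R\}.$$
The key verifications are: (i) $E$ is an ID-subfield of $F_{[\ell]}$ containing $F$, being an $F$-subalgebra of the finite field extension $F_{[\ell]}/F$ and ID-closed because $\theta^{(k)}(1\otimes a) = 1\otimes \theta^{(k)}(a)$ and $R$ is stable under $\theta$ (elements of $C[\G]$ are constants); (ii) the inclusion $E\otimes_F E \hookrightarrow F_{[\ell]}\otimes_F F_{[\ell]}$ has image inside $R$ and identifies $C_{E\otimes_F E}$ with exactly $C[\G]$, using Hopf compatibility to iterate the partial coaction; (iii) a $C$-basis of $C[\G]$ assembles, via this coaction, into a fundamental solution matrix for a suitable IDE over $F$, which together with (ii) yields a torsor isomorphism $E\otimes_F E \cong E\otimes_C C[\G]$ and exhibits $E/F$ as a PV-extension with $\uGal(E/F) = \G$.

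The ``in particular'' clause is immediate from (\ref{eq:torsor-iso}): when $F_{[\ell]}/F$ is already PV with group $\G_0 := \uGal(F_{[\ell]}/F)$, the torsor isomorphism gives $F_{[\ell]}\otimes_F F_{[\ell]} \cong F_{[\ell]}\otimes_C C[\G_0]$, so $C_{F_{[\ell]}\otimes_F F_{[\ell]}} = C[\G_0]$ is already a Hopf algebra under the comultiplication of the theorem, forcing $\tilde{C}_\ell = C[\G_0]$. The main obstacle is step (ii) of the converse---pinning down $C_{E\otimes_F E}$ as exactly $C[\G]$, with no excess---which rests on the Hopf compatibility of $C[\G] \hookrightarrow \tilde{C}_\ell$: only this compatibility ensures that the partial coaction $E \to E\otimes_C C[\G]$ extracted from the embedding $1\otimes E \subseteq R$ is well-defined and co-associative, ultimately producing the torsor structure required by the PV definition.
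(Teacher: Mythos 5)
Your forward implication and your treatment of the ``in particular'' clause are correct, and the latter coincides with what the paper actually writes: for the main biconditional the paper gives no argument at all but cites \cite{am:igsidgg}, Thm.~4.5, and only the final clause is proved in the text (exactly as in your last paragraph). One small point in the forward direction: to pass from the Hopf-subalgebra inclusion $C[\G]\subseteq\tilde{C}_\ell$ to ``$\G$ is a factor group of $\Spec(\tilde{C}_\ell)$'' you should invoke the fact that a commutative Hopf algebra over a field is faithfully flat over any Hopf subalgebra, so that $\Spec(\tilde{C}_\ell)\to\G$ is really an epimorphism of affine group schemes and not merely a morphism.

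The converse is where the content of the theorem lies, and your step (ii) is asserted rather than proved. The easy half of (ii) is the inclusion $C_{E\otimes_F E}\subseteq C[\G]$, since $E\otimes_F E$ maps into $R\isom F_{[\ell]}\otimes_C C[\G]$, whose constants are $C[\G]$. What you actually need, and what the Hopf hypothesis is for, is the reverse containment $C[\G]\subseteq E\otimes_F E$: given $c=\sum_k a_k\otimes b_k\in C[\G]$ with the $a_k$ linearly independent over $F$, the condition $\Delta(c)=\sum_k a_k\otimes 1\otimes b_k\in C[\G]\otimes_C C[\G]$ inside $F_{[\ell]}\otimes_F F_{[\ell]}\otimes_F F_{[\ell]}$ lets you apply $F$-linear functionals dual to the $a_k$ in the first tensor slot to conclude $1\otimes b_k\in (F_{[\ell]}\otimes 1)\cdot C[\G]=R$, i.e.\ $b_k\in E$; stability of $C[\G]$ under the antipode (the flip of $F_{[\ell]}\otimes_F F_{[\ell]}$) then gives $a_k\in E$ as well. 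Only with this in hand do you obtain $E\otimes_F E\isom E\otimes_C C[\G]$, and you still owe the reader step (iii): that a $C$-basis of $C[\G]$, written out in $E\otimes_F E$, yields a fundamental solution matrix in $\GL_n(E)$ for an IDE over $F$ whose entries generate $E$, so that $E/F$ is a PV-extension with group $\G$ (here $C_E=C$ and ID-simplicity are free because $E$ is a field inside $F_{[\ell]}$). As written, your ``main obstacle'' remark names the difficulty without resolving it, so the converse is a plan rather than a proof; the missing argument is precisely the content of \cite{am:igsidgg}, Thm.~4.5, which the paper cites in place of a proof.
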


\begin{proof}
The first statement is proved in \cite{am:igsidgg}, Thm.~4.5. In the case where
$F_{[\ell]}/F$ is a PV-extension, the torsor isomorphism
(\ref{eq:torsor-iso}) implies that
$C[\uGal(F_{[\ell]}/F)]\isom C_{F_{[\ell]}\otimes_F F_{[\ell]}}$ is a Hopf
algebra. Hence, $\tilde{C}_{\ell}=
C_{F_{[\ell]}\otimes_F F_{[\ell]}}$, and $\Spec(\tilde{C}_{\ell})\isom
\uGal(F_{[\ell]}/F)$.
\end{proof}

\begin{cor}\label{cor:special-realisation} \cite[Cor.~4.6]{am:igsidgg}
Let $F$ be an ID-field and suppose that $F$ is a PV-extension of
some nontrivial ID-field $L$ satisfying $L_1=L^p$. 
An infinitesimal group scheme of height $\leq \ell$ is realisable as
ID-Galois group scheme over $F$, if and only if it is a factor group
of $\uGal(F_{[\ell]}/F)\isom \Spec(C_{F_{[\ell]}\otimes_F F_{[\ell]}})$.
\end{cor}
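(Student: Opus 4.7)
The proof is a direct combination of Corollary~\ref{cor:F_ell-is-pv} and Theorem~\ref{thm:general-realisation}, so my plan is essentially to chain these two results together. First, I would invoke Corollary~\ref{cor:F_ell-is-pv}: since $F$ is assumed to be a PV-extension of a nontrivial ID-field $L$ with $L_1 = L^p$, that corollary guarantees that $F_{[\ell]}/F$ is itself a PV-extension for every $\ell\in\NN$.

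Next, once $F_{[\ell]}/F$ is known to be PV, the ``in particular'' clause of Theorem~\ref{thm:general-realisation} applies and gives both $\tilde{C}_\ell = C_{F_{[\ell]}\otimes_F F_{[\ell]}}$ and the identification $\Spec(\tilde{C}_\ell) \isom \uGal(F_{[\ell]}/F)$. Combining this with the first assertion of Theorem~\ref{thm:general-realisation}, which characterises the infinitesimal group schemes of height $\leq\ell$ realisable over $F$ as precisely the factor groups of $\Spec(\tilde{C}_\ell)$, yields exactly the statement of the corollary.

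The only hypothesis of Theorem~\ref{thm:general-realisation} not immediately present in the corollary's setup is that $F$ has finite degree of imperfection, so I would verify this separately. By Remark~\ref{rem:max-id-extension} applied to $L$ (whose iterative derivation is non-degenerate, as recalled at the start of the proof of Corollary~\ref{cor:F_ell-is-pv}), one has $[L:L_1] = p$; together with the hypothesis $L_1 = L^p$ this gives $[L:L^p] = p$, so $L$ has degree of imperfection $1$. Since a PV-ring is a finitely generated $F$-algebra, the PV-field $F$ is finitely generated over $L$, and therefore inherits finite degree of imperfection from $L$. Hence Theorem~\ref{thm:general-realisation} indeed applies.

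I do not expect any real obstacle: both main ingredients are already proved in the paper (or in the cited \cite{am:igsidgg}), and the present corollary is merely the specialisation of Theorem~\ref{thm:general-realisation} to the situation in which $F_{[\ell]}/F$ is automatically PV. The slightly delicate point is simply to spell out why the finite-imperfection hypothesis of Theorem~\ref{thm:general-realisation} is satisfied in the present setting, but as sketched above this is a routine consequence of non-degeneracy of $\theta$ on $L$ together with finite generation of $F/L$.
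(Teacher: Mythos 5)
Your proof is correct and follows exactly the paper's own argument: apply Corollary~\ref{cor:F_ell-is-pv} to see that $F_{[\ell]}/F$ is a PV-extension, then conclude via Theorem~\ref{thm:general-realisation}. Your additional verification that $F$ has finite degree of imperfection (via $[L:L^p]=p$ and finite generation of $F/L$) is a sound and welcome check of a hypothesis the paper leaves implicit.
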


\begin{proof}
By Corollary \ref{cor:F_ell-is-pv}, $F_{[\ell]}/F$ is a PV-extension, and the
claim follows from Theorem \ref{thm:general-realisation}.
\end{proof}

\section{Finite separable PV-extensions}\label{sec:sep-pv}

In this section we consider PV-extensions with finite reduced Galois group
schemes, i.e., finite separable PV-extensions. Since iterative derivations
extend uniquely to finite separable field extensions (see
\cite{bhm-mvdp:ideac},2.1,(5)), we obtain a close
relationship to classical Galois extensions.

\begin{lem}
Let $E$ be a finite (classical) Galois extension of $F$ which is geometric over
$C$, and let $G$ be its Galois group. Let $E$ be equipped with the
unique iterative derivation extending the iterative derivation on $F$. Then the
extension $E/F$ is a PV-extension with Galois group scheme
$\uGal(E/F)=\Spec(C[G])$, the constant group scheme corresponding to $G$.
\end{lem}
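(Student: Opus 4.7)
My plan is to split the proof into two stages: first realise $E$ as a PV-ring over $F$ for a suitable IDE, and then identify $\uGal(E/F)$ with the constant group scheme $\underline{G}_C:=\Spec(C[G])$.

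Stage one begins with the observation that $G$ acts on $E$ by \emph{ID}-automorphisms. Indeed, for $\sigma\in G$ the map $\sigma\circ\theta_E\circ\sigma^{-1}$ is an iterative derivation on $E$ extending $\theta_F$ (since $\sigma$ fixes $F$), so by the uniqueness of extensions to finite separable extensions it equals $\theta_E$. Thus $G=\Aut(E/F)=\Aut^{ID}(E/F)$. Now fix an $F$-basis $\alpha_1,\dots,\alpha_n$ of $E$ and an enumeration $G=\{\sigma_1,\dots,\sigma_n\}$, and set $Y:=(\sigma_j(\alpha_i))_{i,j}\in M_n(E)$. By Dedekind's linear independence of characters, $Y\in\GL_n(E)$. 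A direct computation gives $\sigma_k(Y)=Y\cdot P_k$, where $P_k$ is the permutation matrix encoding the left translation $j\mapsto k\cdot j$ on $G$; in particular $P_k\in\GL_n(C)$, so $\theta_E(P_k)=P_k$. Setting $A:=\theta_E(Y)\cdot Y^{-1}\in\GL_n(E[[T]])$ and using that $G$ commutes with $\theta_E$, I obtain $\sigma_k(A)=A$ for every $k$, whence $A\in\GL_n(F[[T]])$. The conditions $A_0=\mathds{1}_n$ and the IDE-compatibility of the coefficients of $A$ follow automatically from the existence of the fundamental solution $Y$ (Remark~\ref{ide-condition}).

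The remaining PV-ring axioms for $E$ are then easy: $E$ is ID-simple as a field; $E=F[(\sigma_j(\alpha_i))_{i,j},\det(Y)^{-1}]$ because the column of $Y$ indexed by $\sigma_j=\id$ contains the $F$-basis $\alpha_1,\dots,\alpha_n$; and $C_E=C$ because elements of $C_E$ are algebraic over $C_F=C$, while the hypothesis that $E/F$ is \emph{geometric} over $C$ says precisely that $C$ is algebraically closed in $E$. Hence $E/F$ is a PV-extension with PV-ring $E$.

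For stage two, write $\G:=\uGal(E/F)$. The $G$-action on $E$ induces a canonical morphism of $C$-group schemes $\iota:\underline{G}_C\to\G$ whose effect on $C$-points is the tautological identification $G=\Aut^{ID}(E/F)=\G(C)$. By the dimension row of the table in Section~\ref{sec:notation}, $\dim_CC[\G]=[E:F]=|G|$, so $|\G|=|\underline{G}_C|$. Since $C$ is perfect, the reduced part $\G_{\mathrm{red}}$ is étale and decomposes as $\bigsqcup_i\Spec(K_i)$ with $K_i/C$ finite separable; its $C$-rational points correspond precisely to those indices with $K_i=C$. The injection $G=\underline{G}_C(C)\hookrightarrow\G_{\mathrm{red}}(C)$ forces at least $|G|$ such constant components, so $|\G_{\mathrm{red}}|\geq|G|=|\G|$. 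Consequently $\G=\G_{\mathrm{red}}$ is itself the constant group scheme on $\G(C)=G$, and $\iota$, being the identity on $C$-points, is the desired isomorphism. The main obstacle is exactly this last identification: matching orders is not enough, and one must combine the natural map from $\underline{G}_C$ with the étale decomposition available over the perfect field $C$ in order to rule out infinitesimal or twisted étale components of $\G$.
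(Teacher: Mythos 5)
Your proof is correct and follows essentially the same route as the paper's: the Dedekind matrix $Y=(\sigma_j(\alpha_i))$, the uniqueness of the extension of the iterative derivation to finite separable extensions (making $G$ act by ID-automorphisms), and the dimension count $\dim_C C[\uGal(E/F)]=[E:F]=|G|$ to pin down the Galois group scheme. The only cosmetic differences are that you obtain $A\in\GL_n(F[[T]])$ by Galois descent from $A=\theta(Y)Y^{-1}$ rather than defining it directly via $\theta(x_i)=\sum_j a_{ij}x_j$, and that you conclude by counting $C$-points of the \'etale part over the perfect field $C$ instead of observing, as the paper does, that $\Spec(C[G])$ sits inside $\uGal(E/F)$ as a subgroup scheme of the same $C$-dimension.
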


\begin{proof}
Let $\{x_1,\dots, x_n\}$ be an $F$-basis of $E$, and let $a_{ij}\in F[[T]]$
such that $\theta(x_i)=\sum_{j=1}^n a_{ij}x_j$ for all $i=1,\dots, n$. Furthermore, let
$G=\{\sigma_1,\dots, \sigma_n\}$, and $Y=\bigl(\sigma_k(x_i)\bigr)_{1\leq i,k\leq n}\in
\GL_n(E)$. ($Y$ is invertible by Dedekind's lemma on the independence of automorphisms.)\\
By definition, one has $\theta(\left(\begin{smallmatrix}  x_1\\ \vdots\\ x_n 
\end{smallmatrix}\right))=A\left(\begin{smallmatrix}  x_1\\ \vdots\\ x_n 
\end{smallmatrix}\right)$, where $A=(a_{ij})
\in {\rm Mat}_{n\times n}(F[[T]])$.
Since the extension of the iterative derivation of $F$ to $E$ is unique, all automorphisms are
indeed ID-automorphisms, and therefore $\theta(Y)=AY$.

Therefore by Remark \ref{ide-condition}, $\theta(\vect{y})=A\vect{y}$ is an IDE and
$Y$ is a fundamental solution matrix for this IDE. Furthermore, $E$ is generated by the
entries of $Y$, and $C_E=C$, since $E/F$ is geometric.
Hence, $E$ is a PV-field for the IDE $\theta(\vect{y})=A\vect{y}$.\\
Finally, $\Spec(C[G])$ is a subgroup scheme of $\uGal(E/F)$, since $G$ acts by ID-auto\-morphisms,
and $\dim_C(C[G])=n=[E:F]=\dim_C(C[\uGal(E/F)])$. Hence, $\Spec(C[G])=\uGal(E/F)$.
\end{proof}

\begin{rem}
In the case that $C$ is algebraically closed, the statement also follows easily from
\cite{bhm-mvdp:ideac}, 4.1.:
in this case, Matzat and van der Put proved that the $C$-rational points of $\uGal(E/F)$
equal $G$. Since for an algebraically closed field $C$, a reduced group scheme
is determined by its $C$-rational points, the claim follows.
\end{rem}

\begin{lem}
Let $E/F$ be a finite separable PV-extension with Galois group scheme $\G$. Let
$E_{\bar{C}}=E\otimes_C \bar{C}$ and $F_{\bar{C}}=F\otimes_C \bar{C}$ be the
extensions of constants, where $\bar{C}$ denotes an algebraic closure of $C$.
Then $E_{\bar{C}}/F_{\bar{C}}$ is a finite (classical) Galois extension with
Galois group $\G(\bar{C})$.
\end{lem}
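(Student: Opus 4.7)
The plan is to base-change the torsor isomorphism (\ref{eq:torsor-iso}) for $R=E$ from $C$ to $\bar{C}$, verify that both $F_{\bar{C}}$ and $E_{\bar{C}}$ remain fields, and then read off the classical Galois description from the resulting splitting.

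The key technical step is showing that $F_{\bar{C}}$ and $E_{\bar{C}}$ are fields. My approach would go through regularity of $F/C$ and $E/C$: since $C$ is perfect and equals the field of constants of both, any element $\alpha\in E$ (respectively $\alpha\in F$) algebraic over $C$ is separably algebraic over $C$, and applying $\theta$ to its minimal polynomial $p\in C[X]$ yields a root of $p$ in $E[[T]]$ that agrees with $\alpha$ modulo $T$. Hensel's lemma in $E[[T]]$ (applicable since $p'(\alpha)\ne 0$) forces this root to be $\alpha$, so $\theta(\alpha)=\alpha$ and $\alpha\in C_E=C$. Thus $C$ is algebraically closed in $E$ and in $F$, so $E/C$ and $F/C$ are regular extensions. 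For every finite subextension $C'/C$ of $\bar{C}/C$, $E\otimes_C C'$ and $F\otimes_C C'$ are finite \'etale integral domains over $E$ and $F$, hence fields; a direct limit over such $C'$ then shows that $E_{\bar{C}}$ and $F_{\bar{C}}$ are fields.

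Next I would tensor (\ref{eq:torsor-iso}) with $\bar{C}$ over $C$. The left-hand side identifies with $E_{\bar{C}}\otimes_{F_{\bar{C}}} E_{\bar{C}}$, and the right-hand side becomes $E_{\bar{C}}\otimes_{\bar{C}}\bar{C}[\G\times_C\bar{C}]\isom E_{\bar{C}}^n$ (with $n=[E:F]$), because $\G\times_C\bar{C}$ is finite reduced and hence a constant group scheme over the algebraically closed field $\bar{C}$. The resulting isomorphism
\begin{equation*}
E_{\bar{C}}\otimes_{F_{\bar{C}}} E_{\bar{C}}\;\isom\; E_{\bar{C}}^n,
\end{equation*}
between fields is the standard characterisation of $E_{\bar{C}}/F_{\bar{C}}$ as a classical Galois extension of degree $n$.

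Finally I would identify its Galois group $G=\Aut(E_{\bar{C}}/F_{\bar{C}})$. By the uniqueness of the iterative derivation on a finite separable extension, every $F_{\bar{C}}$-automorphism of $E_{\bar{C}}$ is automatically an ID-automorphism, so $E_{\bar{C}}/F_{\bar{C}}$ is itself an ID-PV-extension (the same fundamental solution matrix works), with field of constants $\bar{C}$ and Galois group scheme $\G\times_C\bar{C}$ -- the base-change statement being immediate from the functor-of-points definition of $\uGal$. Applying the previous lemma to $E_{\bar{C}}/F_{\bar{C}}$, which is trivially geometric over $\bar{C}$ since $\bar{C}$ is algebraically closed, gives $\G\times_C\bar{C}\isom\Spec(\bar{C}[G])$; taking $\bar{C}$-points then yields $G=(\G\times_C\bar{C})(\bar{C})=\G(\bar{C})$. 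The main obstacle is the first step: without knowing that $F_{\bar{C}}$ and $E_{\bar{C}}$ are fields, the statement would not even make sense as a classical Galois extension, whereas the remaining steps are essentially formal consequences of the torsor isomorphism and the previous lemma.
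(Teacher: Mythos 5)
Your proof is correct, but it takes a genuinely different route from the paper's. The paper's argument is three lines: by the functor-of-points definition, $\G(\bar{C})=\Aut^{ID}(E_{\bar{C}}/F_{\bar{C}})$; since $\G$ is reduced (separability of $E/F$), the invariants under the group of $\bar{C}$-points coincide with the invariants under the full group scheme, so $E^{\G(\bar{C})}=E^{\G}=F$ and hence $(E_{\bar{C}})^{\G(\bar{C})}=F_{\bar{C}}$; Artin's characterisation then gives that $E_{\bar{C}}/F_{\bar{C}}$ is Galois with group $\G(\bar{C})$. You instead base-change the torsor isomorphism and invoke the splitting criterion $E_{\bar{C}}\otimes_{F_{\bar{C}}}E_{\bar{C}}\isom E_{\bar{C}}^{\,n}$, then identify the group via base-change compatibility of $\uGal$ and the first lemma of the section. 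What your route buys is that it makes explicit two points the paper leaves implicit: that $E_{\bar{C}}$ and $F_{\bar{C}}$ are actually fields (your Hensel argument showing $C$ is algebraically closed in $E$ is the right one), and that reduced finite group schemes become constant over $\bar{C}$. What the paper's route buys is brevity and the avoidance of any base-change bookkeeping; its only nontrivial input beyond definitions is the equality $E^{\G(\bar{C})}=E^{\G}$ for reduced $\G$. Two very minor points in your write-up: the claim that $E_{\bar{C}}/F_{\bar{C}}$ is ``trivially geometric'' still rests on the standard fact that $C_{F\otimes_C D}=D$ when $C_F=C$ is a field (worth citing rather than calling trivial), and your final step is cleaner if ordered as: classical Galois first (from the splitting), then the previous lemma makes it a PV-extension with group $\Spec(\bar{C}[G])$, then the functorial identity $\uGal(E_{\bar{C}}/F_{\bar{C}})\isom\G\times_C\bar{C}$ yields $G=\G(\bar{C})$. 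Neither affects correctness.
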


\begin{proof}
By definition of the Galois group scheme $\G$, the group
$\G(\bar{C})$ is equal to $\Aut^{ID}(E_{\bar{C}}/F_{\bar{C}})$. (Recall that in the
finite case the PV-ring and the PV-field are equal.) Since
$E_{\bar{C}}/F_{\bar{C}}$ is separable, $\G$ is reduced and hence,
$E^{\G(\bar{C})}=E^\G=F$. Therefore, $(E_{\bar{C}})^{\G(\bar{C})}=F_{\bar{C}}$, which
implies that $E_{\bar{C}}$ is Galois over $F_{\bar{C}}$ with Galois
group $\G(\bar{C})$.
\end{proof}

\begin{rem}
The previous lemma tells us that all finite separable PV-extensions become
classical Galois extensions after an algebraic extension of the constants.
Actually, this extension of constants can be chosen to be finite Galois. 
Hence finite separable PV-extensions are {\em almost
classical Galois extensions} in the sense of Greither and Pareigis 
(cf.~\cite{cg-bp:hgtsfe}, Def.~4.2).
\end{rem}

\section{Finite PV-extensions}\label{sec:finite-pv}

We now consider the case of arbitrary finite PV-extensions, i.e.,
PV-exten\-sions with finite Galois group schemes. By \cite{md-pg:ga}, Ch.~II,
\S 5, Cor.~2.4, every finite group scheme is the semi-direct product of an
infinitesimal group scheme and a finite reduced group scheme. Hence, the results
of the previous sections also give us information in this case.

\begin{thm}\label{thm:finite-realisation}
Let $\G$ be a finite group scheme over $C$, $\G^0\ideal \G$ the connected component of
$\G$ (an infinitesimal group scheme), and $\H\leq \G$ the induced reduced group
scheme. Assume that there is $\ell\geq {\rm ht}(\G^0)$ such that $F_{[\ell]}/F$
is a PV-extension. Then $\G$ is realisable over $F$, if and only if $\G\isom
\G^0\times \H$, $\G^0$ is a factor group of $\uGal(F_{[\ell]}/F)$ and $\H$ is
realisable over $F$.
\end{thm}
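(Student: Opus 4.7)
The proof splits into the two implications, with the forward direction requiring the more delicate argument.

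For the reverse direction, assume $\G\cong\G^0\times\H$ with $\G^0$ a factor group of $\uGal(F_{[\ell]}/F)$ and $\H$ realisable over $F$. I would invoke the first part of Theorem~\ref{thm:general-realisation} (applicable because $F_{[\ell]}/F$ is a PV-extension, so $\Spec(\tilde{C}_{\ell})\cong\uGal(F_{[\ell]}/F)$) to obtain a purely inseparable PV-extension $E_1/F$ realising $\G^0$; by hypothesis there is also a separable PV-extension $E_2/F$ realising $\H$. Since purely inseparable and separable field extensions are linearly disjoint over $F$, the tensor product $E_1\otimes_F E_2$ is a field, and equipped with the block-diagonal IDE built from the defining IDEs of $E_1$ and $E_2$ it becomes a PV-extension of $F$. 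Its Galois group scheme is a closed subgroup of $\G^0\times\H$ surjecting onto both factors with total order $\dim_C(C[\G^0])\cdot\dim_C(C[\H])$, hence equal to $\G^0\times\H=\G$.

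For the forward direction, let $E/F$ be a PV-extension realising $\G$ and set $M:=E^{\G^0}$. Since $\G^0$ is normal with quotient $\H$, Theorem~\ref{galois_correspondence} makes $M/F$ a PV-extension with group $\H$, and $M/F$ is separable as the maximal separable subextension of $E/F$; this realises $\H$ over $F$. The core idea is to embed $E$ inside the larger PV-extension $N:=M\cdot F_{[\ell]}$, formed in a common inseparable closure of $F$. The inclusion $E\subseteq N$ follows from two observations: $E/M$ is purely inseparable of exponent $\leq {\rm ht}(\G^0)\leq\ell$, so Remark~\ref{rem:max-id-extension} applied to $M$ gives $E\subseteq M_{[\ell]}$; and $M\cdot F_{[\ell]}=M_{[\ell]}$ by comparing $[M_{[\ell]}:M]=p^{\ell(m-1)}$ with $[M\cdot F_{[\ell]}:M]=[F_{[\ell]}:F]=p^{\ell(m-1)}$, using linear disjointness of $M$ and $F_{[\ell]}$ over $F$ (separable vs.\ purely inseparable) together with the equality $[M:M^p]=[F:F^p]$. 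The same block-diagonal compositum argument as above then shows $N/F$ is a PV-extension with group $\H\times\mathcal{I}$, where $\mathcal{I}:=\uGal(F_{[\ell]}/F)$. By Theorem~\ref{galois_correspondence} for $N/F$, $E=N^{\mathcal{K}}$ for some normal closed subgroup scheme $\mathcal{K}\leq\H\times\mathcal{I}$; since $M=N^{\{1\}\times\mathcal{I}}\subseteq E$, inclusion-reversal forces $\mathcal{K}\leq\{1\}\times\mathcal{I}$. A subgroup scheme lying in the second factor is automatically conjugation-invariant under $\H$, so the quotient splits: $\G\cong(\H\times\mathcal{I})/\mathcal{K}\cong\H\times(\mathcal{I}/\mathcal{K})$, identifying $\G^0=\mathcal{I}/\mathcal{K}$ as a factor group of $\uGal(F_{[\ell]}/F)$ and yielding the direct product structure.

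The main obstacle is the identification $N=M_{[\ell]}$, which is what forces $E$ into an ambient PV-extension with a tractable Galois group. It rests on two nonformal inputs: the linear disjointness of the separable $M/F$ from the purely inseparable $F_{[\ell]}/F$, and the preservation of the degree of imperfection $[M:M^p]=[F:F^p]$ under the separable extension $M/F$. Once $E\subseteq N$ is secured, the Galois correspondence for $N/F$ handles both the direct product splitting and the quotient claim simultaneously.
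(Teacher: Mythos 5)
Your proof is correct and follows essentially the same strategy as the paper: both directions hinge on forming the compositum $E^{\G^0}\otimes_F F_{[\ell]}$, which is a PV-extension with group $\H\times\uGal(F_{[\ell]}/F)$, embedding $E$ into it, and reading off the direct-product splitting and the factor-group claim from the Galois correspondence. The only (harmless) variation is how you establish $E\subseteq N$ — via $E\subseteq M_{[\ell]}=M\cdot F_{[\ell]}$ using preservation of the degree of imperfection, whereas the paper writes $E=E^{\H}\cdot E^{\G^0}$ and uses $E^{\H}\subseteq F_{[\ell]}$.
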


\begin{proof}
Let $\G\isom \G^0\times \H$, such that $\G^0$ is a factor group of
$\uGal(F_{[\ell]}/F)$ and $\H$ is realisable over $F$ as $\H\isom
\uGal(E''/F)$. By Theorem \ref{thm:general-realisation}, $\G^0$ is the Galois
group scheme of some intermediate PV-field $F\leq E'\leq F_{[\ell]}$. Since
$E''/F$ is separable and $E'/F$ is purely inseparable, $E'$ and $E''$ are linearly
disjoint over $F$, and so $E'\otimes_F E''$ is a PV-extension of $F$ with
Galois group scheme $\uGal(E'\otimes_F E''/F)\isom \G^0\times \H$. Hence, $\G$
is realisable over $F$.

On the other hand, let $\G$ be realised over $F$ as $\G\isom \uGal(E/F)$. By
\cite{md-pg:ga}, Ch.~II, \S 5, Cor.~2.4, $\G$ is a semi-direct product
$\G\isom \G^0\rtimes \H$, and therefore $\H\isom \G/\G^0\isom
\uGal(E^{\G^0}/F)$, i.e., $\H$ is realisable over $F$. Furthermore, $E^\H$ is a
purely inseparable ID-extension of $F$ of height $\leq {\rm ht}(\G^0)$. By
assumption, there is $\ell\geq {\rm ht}(\G^0)$ such that  $F_{[\ell]}/F$ is a
PV-extension and therefore $F_{[\ell]}$ is a PV-extension containing $E^\H$. As
in the first part of the proof, $\tilde{E}:=F_{[\ell]}\otimes_F E^{\G^0}$ is a
PV-extension of $F$ with Galois group $\uGal(\tilde{E}/F)\isom
\uGal(F_{[\ell]}/F) \times \uGal(E^{\G^0}/F)\isom \uGal(F_{[\ell]}/F) \times
\H$. Since $E^\H$ and $E^{\G^0}$ are subfields of $\tilde{E}$, $E$ is also a
subfield of $\tilde{E}$. Therefore, $\uGal(E/F)\isom \G^0\rtimes \H$ is a
factor group of $\uGal(\tilde{E}/F)\isom \uGal(F_{[\ell]}/F) \times
\H$ which implies that $\H$ acts also trivially on $\G^0$, i.e., the
semi-direct product $\G^0\rtimes \H$ is in fact a direct product. Finally,
we obtain that $E^\H$ is a PV-extension of $F$ (since $\H\leq \G$ is a
normal subgroup) with Galois group $\G^0$, and hence $\G^0$ is a factor
group of $\uGal(F_{[\ell]}/F)$.
\end{proof}

\begin{cor}\label{cor:inverse-problem-over-pv-field}
Let $C$ be algebraically closed, and let $F$ be a PV-extension of some function
field $L/C$ in one variable with non-degenerate iterative derivation. Then the finite group schemes
which occur as Galois
group scheme over $F$ are exactly the direct products $\G^0\times \H$, where
$\H$ is a constant group scheme (i.e., a reduced finite group scheme) and $\G^0$ is
a factor group of some $\uGal(F_{[\ell]}/F)$.
\end{cor}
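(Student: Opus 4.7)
My plan is to reduce this to Theorem \ref{thm:finite-realisation} by verifying its two hypotheses in the present setting. That theorem characterises the realisable finite group schemes over $F$ as direct products $\G^0\times\H$ with $\G^0$ a factor group of some $\uGal(F_{[\ell]}/F)$ and $\H$ realisable over $F$, provided $F_{[\ell]}/F$ is a PV-extension for some $\ell\geq {\rm ht}(\G^0)$. So what I need to establish is: (a) $F_{[\ell]}/F$ is a PV-extension for every $\ell\in\NN$, and (b) every constant finite group scheme over $C$ is realisable as an ID-Galois group scheme over $F$.

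For (a) I would invoke Corollary \ref{cor:F_ell-is-pv}. Its hypothesis $L_1=L^p$ is automatic here, because $L$ is a one-variable function field over the algebraically closed (hence perfect) field $C$, so $[L:L^p]=p$; combined with $[L:L_1]=p$ from Remark \ref{rem:max-id-extension} and the always-valid inclusion $L^p\subseteq L_1$, this forces equality.

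For (b), the first lemma of Section \ref{sec:sep-pv} identifies constant Galois group schemes $\Spec(C[G])$ over $F$ with geometric classical $G$-Galois extensions of $F$. I would appeal to the classical positive solution of the inverse Galois problem for one-variable function fields over algebraically closed fields in arbitrary characteristic (Harbater, Pop) to realise any prescribed finite $G$ as $\Gal(M/L)$ geometrically, and then arrange that $M$ and $F$ are linearly disjoint over $L$, so that $MF/F$ is again geometrically Galois with the same group $G$. Plugging (a) and (b) into Theorem \ref{thm:finite-realisation} then yields the corollary.

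The main obstacle I anticipate is the linear disjointness step in (b): Harbater-Pop hands me $M/L$ for every finite $G$, but base change to the potentially much larger field $F$ could a priori cut $G$ down to a proper quotient unless $M\cap F=L$ inside a common algebraic closure. Securing this requires some control on the algebraic part of the PV-extension $F/L$ -- for instance by choosing $M$ with sufficiently generic ramification, or by exploiting that only finitely many ramification-theoretic constraints are imposed by the specific PV-extension $F/L$ under consideration -- and this is where I expect most of the technical work to lie.
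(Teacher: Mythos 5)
Your overall strategy coincides with the paper's: reduce to Theorem~\ref{thm:finite-realisation} by checking (a) that $F_{[\ell]}/F$ is a PV-extension for all $\ell$ and (b) that every finite constant group scheme is realisable over $F$. Step (a) is complete and correct; your verification that $L_1=L^p$ (from $[L:L^p]=p=[L:L_1]$ together with $L^p\subseteq L_1$) is precisely the point that makes Corollary~\ref{cor:F_ell-is-pv} applicable, and the paper leaves it implicit.

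The genuine gap is in step (b), and you have located it yourself without closing it: plain realisability of $G=\H(C)$ over $L$ (Harbater--Pop) does not let you ``arrange'' that the realising extension $M$ is linearly disjoint from $F$, and your fallback suggestions (generic ramification, only finitely many constraints) are not developed into an argument. The paper closes this with a strictly stronger input: by \cite{dh:fgepcp}, Thm.~4.4, $\Gal(L^{\rm sep}/L)$ is a \emph{free} profinite group of infinite rank. Freeness solves the split embedding problem determined by the restriction map $\Gal(L^{\rm sep}/L)\to\Gal(F\cap L^{\rm sep}/L)$ and the projection $\H(C)\times\Gal(F\cap L^{\rm sep}/L)\to\Gal(F\cap L^{\rm sep}/L)$: one gets an epimorphism $\phi$ onto the product lifting the restriction map, and ${\rm pr}_1\circ\phi$ cuts out a $G$-extension $\tilde{L}/L$ with $\tilde{L}\cap F=L$, whence $\tilde{L}\otimes_L F$ is a Galois extension of $F$ with group $G$ (geometric automatically, as $C$ is algebraically closed). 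So the single missing idea is to upgrade ``inverse Galois problem over $L$'' to ``split embedding problem over $L$ relative to $F\cap L^{\rm sep}$'', which is exactly what the freeness theorem provides; without some such input, the linear-disjointness step does not follow.
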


\begin{proof}
By Corollary~\ref{cor:F_ell-is-pv}, $F_{[\ell]}$ is
a PV-extension of $F$ for all $\ell$. So
by Theorem~\ref{thm:finite-realisation}, we only have to show that every
finite reduced group scheme $\H$ is realisable. Since $C$ is algebraically
closed, the PV-extensions $E$ of $F$ with Galois group $\H$ are the (classical)
Galois extensions with Galois group $\H(C)$. By \cite{dh:fgepcp}, Thm.~4.4, the absolute
Galois group of $L$, $\Gal(L^{\rm sep}/L)$, is a free group on infinitely many generators.
Hence, there is an epimorphism $\phi:\Gal(L^{\rm sep}/L)\to \H(C)\times \Gal(F\cap L^{\rm sep}/L)$
such that the composition of $\phi$ and the projection ${\rm pr}_2$ onto the second factor is the
restriction map $\Gal(L^{\rm sep}/L) \to \Gal(F\cap L^{\rm sep}/L)$.

But this means that ${\rm pr}_1\circ \phi:\Gal(L^{\rm sep}/L)\to \H(C)$ corresponds to a Galois
extension $\tilde{L}$ of $L$ with group $\H(C)$ which is linearly disjoint to $F$. Hence
$\tilde{L}\otimes_L F$ is a Galois extension of $F$ with Galois group $\H(C)$.
\end{proof}

\begin{exmp}
Let $C$ be an algebraically closed field of positive characteristic $p$. We want $L/C$ to be a function field in one variable over $C$ with a non-degenerate
iterative derivation $\theta$, and $F$ to be a PV-extension of $L$ with Galois group scheme
$\GG_m$.
For example, we may take $L=C(t)$ with $\theta=\theta_t$
the iterative derivation with respect to $t$, given by
$\theta_t(t)=t+1\cdot T\in L[[T]]$,
and $F=L(t^\alpha)$, with $\alpha\in \ZZ_p\setminus \QQ$ and the
iterative derivation given by
$\theta^{(n)}(t^\alpha)=\binom{\alpha}{n}t^\alpha/t^n$. 

By Theorem \ref{thm:general-realisation}, for all $\ell\geq 0$, $F_{[\ell]}/L$
is a PV-extension with $\uGal(F_{[\ell]}/L)\isom \GG_m$, and the
``restriction map'' $\uGal(F_{[\ell]}/L)\isom \GG_m\to \uGal(F/L)\isom
\GG_m$ is given by the Frobenius map $x\mapsto x^{p^\ell}$.
Hence, $\uGal(F_{[\ell]}/F)\isom \mu_{p^\ell}$, the ``group of $p^\ell$th roots
of unity''. The only factor groups of $\mu_{p^\ell}$ are $\mu_{p^k}$ where
$k\leq \ell$. Hence by Theorem \ref{thm:finite-realisation}, the finite Galois group
schemes over $F$ are exactly the group schemes of the form
$\mu_{p^\ell}\times H$, where $\ell\geq 0$ and $H$ is finite reduced.
\end{exmp}

\bibliographystyle{plain}

\begin{thebibliography}{1}

\bibitem{md-pg:ga}
Michel Demazure and Pierre Gabriel.
\newblock {\em Groupes alg\'ebriques. {T}ome {I}: {G}\'eom\'etrie alg\'ebrique,
  g\'en\'eralit\'es, groupes commutatifs}.
\newblock Masson \& Cie, \'Editeur, Paris, 1970.
\newblock Avec un appendice {{\it Corps de classes local}} par Michiel
  Hazewinkel.

\bibitem{cg-bp:hgtsfe}
Cornelius Greither and Bodo Pareigis.
\newblock Hopf {G}alois theory for separable field extensions.
\newblock {\em J. Algebra}, 106(1):239--258, 1987.

\bibitem{dh:fgepcp}
David Harbater.
\newblock Fundamental groups and embedding problems in characteristic {$p$}.
\newblock In {\em Recent developments in the inverse {G}alois problem
  ({S}eattle, {WA}, 1993)}, volume 186 of {\em Contemp. Math.}, pages 353--369.
  Amer. Math. Soc., Providence, RI, 1995.

\bibitem{bhm-mvdp:ideac}
B.~Heinrich Matzat and Marius van~der Put.
\newblock Iterative differential equations and the {A}bhyankar conjecture.
\newblock {\em J. Reine Angew. Math.}, 557:1--52, 2003.

\bibitem{am:gticngg}
Andreas Maurischat.
\newblock {G}alois theory of iterative connections and nonreduced {G}alois
  groups.
\newblock {\em Transactions of the AMS}, 362:5411--5453, 2010.

\bibitem{am:igsidgg}
Andreas Maurischat.
\newblock Infinitesimal group schemes as iterative differential {G}alois
  groups.
\newblock {\em Journal of Pure and Applied Algebra}, 214:2092--2100, 2010.

\end{thebibliography}

\vspace*{.5cm}

\end{document}